\pdfoutput=1

\documentclass[10pt,oneside,reqno]{amsart}

\usepackage{epsfig}
\usepackage{amsthm}
\usepackage{amssymb}
\usepackage{amsmath}
\usepackage{amscd}
\usepackage{color}
\usepackage{esint}
\usepackage{enumitem}
\usepackage{bbm}
\usepackage{mathrsfs}
\usepackage[top=1.1in, bottom=1in, left=1in, right=1in]{geometry}

\usepackage[dvipsnames]{xcolor}
\usepackage[colorlinks=true, pdfstartview=FitV, linkcolor=blue, citecolor=blue, urlcolor=blue]{hyperref}

\newcommand{\lb}{\varLambda}

\def \<{\langle}
\def \>{\rangle}

\newcommand{\bg}{\begin{equation}}
\newcommand{\ed}{\end{equation}}
\newcommand{\bga}{\begin{eqnarray}}
\newcommand{\eda}{\end{eqnarray}}

\def\cbdu{\par{\raggedleft$\Box$\par}}

\newtheorem {Theorem}  {Theorem}

\numberwithin{Theorem}{section}

\newtheorem {Lemma}[Theorem]  {Lemma}

\theoremstyle{definition}

\theoremstyle{remark}

\def \l{\lambda}
\expandafter\chardef\csname pre amssym.def
at\endcsname=\the\catcode`\@ \catcode`\@=11
\def\undefine#1{\let#1\undefined}
\def\newsymbol#1#2#3#4#5{\let\next@\relax
 \ifnum#2=\@ne\let\next@\msafam@\else
 \ifnum#2=\tw@\let\next@\msbfam@\fi\fi
 \mathchardef#1="#3\next@#4#5}
\def\mathhexbox@#1#2#3{\relax
 \ifmmode\mathpalette{}{\m@th\mathchar"#1#2#3}%
 \else\leavevmode\hbox{$\m@th\mathchar"#1#2#3$}\fi}
\def\hexnumber@#1{\ifcase#1 0\or 1\or 2\or 3\or 4\or 5\or 6\or 7\or 8\or
 9\or A\or B\or C\or D\or E\or F\fi}

\font\teneufm=eufm10 \font\seveneufm=eufm7 \font\fiveeufm=eufm5
\newfam\eufmfam
\textfont\eufmfam=\teneufm \scriptfont\eufmfam=\seveneufm
\scriptscriptfont\eufmfam=\fiveeufm

\catcode`\@=\csname pre amssym.def at\endcsname

\newcounter{remark}
\numberwithin{equation}{section}
\numberwithin{figure}{section}

\def \grad {\nabla}

\newcommand{\el}{\varepsilon}

\renewcommand{\l}{\lambda}

\renewcommand{\a}{\alpha}
\renewcommand{\th}{\theta}
\renewcommand{\b}{\beta}

\newcommand{\R}{\mathbf{R}}

\newcommand{\les}{\lesssim}

\newcommand{\Dd}{{\mathcal D}}

\newcommand{\Ee}{{\mathcal E}}
\newcommand{\Ff}{{\mathcal F}}

\newcommand{\Ss}{{\mathcal S}}

\def  \R   {{\mathbb R}}
\def  \Z   {{\mathbb Z}}

\def  \T   {{\mathbb T}}

\def  \haf  {{\frac{1}{2}}}
\def  \p   {\partial}

\def  \Dq    {\Delta_q}
\def  \Dp    {\Delta_p}
\def  \Dtp    {\widetilde{\Delta}_p}

\def  \Sqmt  {\Ss_{q-2}}
\def  \Spmt  {\Ss_{p-2}}
\def  \lqs  {\lambda^s_q}
\def  \lqts  {\lambda^{2s}_q}

\def  \lpts  {\lambda^{2s}_p}

\def  \sump  {\sum_{p=-1}^\infty}

\def  \sumq  {\sum_{q=-1}^\infty}
\def  \sumptq {\sum_{p=-1}^q}
\def  \sumpqlt {\sum_{|p-q|\leq 2}}

\def  \inttwo  {\int_{\T^2}}

\newcommand\onenorm[1]{\lVert#1\rVert_{L^1(\T^2)}}
\newcommand\twonorm[1]{\lVert#1\rVert_{L^2(\T^2)}}

\newcommand\rnorm[1]{\Vert#1\Vert_{L^r(\T^2)}}
\newcommand\Linfnorm[1]{\Vert#1\Vert_{L^\infty(\T^2)}}

\newcommand\intttwo[1]{\int_{\T^2} #1 \, dx}

\def\build#1_#2^#3{\mathrel{\mathop{\kern 0pt#1}\limits_{#2}^{#3}}}

\begin{document}
%\currannalsline{0}{2006}

\title[Local well-posedness for split fractional $2\frac12$D EMHD]{Local well-posedness for the two-and-a-half-dimensional EMHD system with split fractional dissipation}

\author [Qirui Peng]{Qirui Peng}
\address{Department of Mathematics,  University of California, Santa Barbara, Santa Barbara, CA 93117, USA}
\email{qpeng9@ucsb.edu} 

\thanks{}

%%%use \Proof instead of \begin{proof}

%%%% use \Endproof instead of \end{proof}

%%%% use \references {999} instead of \begin{thebibliography}{99}

%%%%used \Endrefs instead of \end{thebibliography}

\begin{abstract}
We study the $2\frac12$-dimensional electron magnetohydrodynamics (EMHD)
system on $\T^2$ with componentwise fractional dissipation,
\begin{align*}
    \p_t a+a_yb_x-a_xb_y&=-\lb^\a a,\\
    \p_t b-a_y\Delta a_x+a_x\Delta a_y&=-\lb^\b b,
\end{align*}
where $0<\a,\b<2$. This system is a $2\frac12$-dimensional reduction of the
magnetic equation in Hall--MHD/EMHD under the ansatz
$B=\nabla\times(ae_z)+be_z$. We prove local well-posedness for initial data
$(a_0,b_0)\in H^{s+1}(\T^2)\times H^s(\T^2)$ with $s\geq 2-\varepsilon$,
provided that
\[
    \a+\b>2.
\]
Thus neither component is required to carry a full Laplacian dissipation; the
smoothing effects of the two fractional dissipations can be combined to control
the Hall nonlinearity. The proof is based on Littlewood--Paley energy estimates,
commutator bounds, and cancellations between the leading low--high interactions.

\bigskip

\noindent\textbf{Keywords:} EMHD; Hall effect; fractional dissipation;
local well-posedness; Littlewood--Paley theory.

\noindent\textbf{MSC:} 35Q35, 35A01, 35B65, 76W05.
\end{abstract}

\maketitle

\section{Introduction}\label{Sec_Intro}
The Hall--magnetohydrodynamics (Hall--MHD) system is a fundamental model for
plasma dynamics in regimes where the Hall effect becomes relevant at small length
scales. Compared with the classical magnetohydrodynamics equations, the Hall term
contains one additional derivative of the magnetic field and therefore produces a
genuinely quasilinear difficulty in Sobolev energy estimates. The Hall--MHD model
has been derived and studied from the viewpoint of kinetic and two-fluid plasma
models by Acheritogaray, Degond, Frouvelle and Liu \cite{ADFL11}. On the
mathematical side, a systematic local well-posedness theory for the
three-dimensional Hall--MHD equations was developed by Chae, Degond and Liu
\cite{CDL14}; see also Chae and Lee \cite{CL14} for blow-up criteria and small
smooth solutions. More recent works have pursued lower-regularity and critical
space theories, including the critical Sobolev and Besov results of Danchin and
Tan \cite{DT21,DT22}, the larger critical Besov framework of Liu and Tan
\cite{LT21}, and the optimal Sobolev local theory of Dai \cite{D21}.

The role of fractional diffusion in Hall-type systems has also attracted
considerable attention. Chae, Wan and Wu \cite{CWW15} showed that the magnetic
diffusion in the Hall--MHD equations can be weakened to a fractional one; their
argument exploits the smoothing of the dissipative term through Littlewood--Paley
and Besov estimates. Related generalized Hall--MHD models with fractional
dissipation were studied, for instance, by Dai and Liu \cite{DL22} and Ye
\cite{Ye22}. Our result is in the same spirit of relaxation as \cite{CWW15}, but in
a different reduced EMHD setting: instead of imposing a full Laplacian
dissipation on the magnetic field, we split the available dissipation between two
scalar components and require only the combined condition $\a+\b>2$.

The electron magnetohydrodynamics (EMHD) equation is the fluid-free magnetic
subsystem obtained by retaining the Hall dynamics while neglecting the ion
velocity. It is used as a model for small-scale plasma dynamics and whistler-type
phenomena; see, for example, the numerical and physical studies of Biskamp,
Schwarz and Drake \cite{BSD96} and Biskamp, Schwarz, Zeiler and Drake
\cite{BSZD99}. In PDE analysis, EMHD is challenging because the Hall nonlinearity
contains a derivative falling on the current $\nabla\times B$. This difficulty is
particularly pronounced without resistivity: Jeong and Oh proved ill-posedness
near degenerate stationary states \cite{JO22}, while later establishing
well-posedness around nonzero uniform magnetic fields \cite{JO25}. In the
presence of resistive effects, Dai studied a two-dimensional EMHD setting near a
steady state \cite{dai2023global}. Most closely related to the present paper is
the partial-resistivity theorem of Dai and Babaei \cite{dai2025well}, where the
$2\frac12$-dimensional EMHD system is shown to be locally well-posed when either
the $a$ equation or the $b$ equation carries a full Laplacian dissipation.

We consider the $2\frac12$-dimensional EMHD ansatz in which the magnetic field is
independent of the vertical variable and is written as
\[
    B=\nabla\times(ae_z)+be_z=(a_y,-a_x,b),
\]
where $a=a(x,y,t)$ is the magnetic potential and $b=b(x,y,t)$ is the vertical
magnetic component. Allowing different fractional dissipations on these two
components leads to the system
\begin{subequations}\label{eq:twohalf_EMHD}
\begin{align}
\p_t a + a_y b_x - a_x b_y &= -\lb^\a a,  \label{eq:twohalf_EMHD_1} \\
\p_t b - a_y \Delta a_x + a_x \Delta a_y &= -\lb^\b b, \label{eq:twohalf_EMHD_2}
\end{align}
on $(0,\infty)\times \T^2$. Here $\lb^s:=(-\Delta)^{\frac{s}{2}}$ is the
Fourier multiplier defined by
\[
    \Ff(\lb^s u)(k)=|k|^s\hat u(k),\qquad k\in\Z^2,
\]
and throughout the paper we assume
\[
    0<\a,\b<2.
\]
\end{subequations}

The theorem below complements the endpoint result in \cite{dai2025well}.
We do not treat the cases $(\a,\b)=(2,0)$ or $(0,2)$; instead, we consider the
genuinely fractional regime
\[
    0<\a,\b<2,
    \qquad \a+\b>2.
\]
In this regime each component has less than one full Laplacian of dissipation,
but the two dissipations together provide enough smoothing to close the local
Sobolev estimates. In this sense the main theorem relaxes the full-diffusion
requirement in the $2\frac12$-dimensional EMHD model by sharing the dissipative
gain between the magnetic potential and the vertical component.

Compared with the direct $2\frac12$-dimensional consequence of the fractional
Hall--MHD theory of Chae, Wan and Wu \cite{CWW15}, which corresponds in the
present notation to the symmetric condition $\alpha=\beta>1$, our theorem
replaces this requirement by the componentwise condition
\[
    \alpha+\beta>2.
\]
Thus the result may be viewed as a $2\frac12$-dimensional split-dissipation
refinement of their fractional-diffusion mechanism.

\begin{Theorem}\label{Thm:main}
Let the exponents $\alpha,\beta$ in the system \eqref{eq:twohalf_EMHD} satisfy
\[
0<\alpha,\beta < 2 \quad \text{and} \quad \alpha+\beta > 2.
\]
Then there exists $\varepsilon > 0$ such that, for any initial data
\[
(a_0,b_0)\in H^{s+1}(\mathbb T^2)\times H^s(\mathbb T^2)
\]
with $s \geq 2-\varepsilon$, the system \eqref{eq:twohalf_EMHD} admits a
unique local solution on $[0,T)$ for some positive time
\[
T=T(a_0,b_0)
\]
depending on the initial data.
\end{Theorem}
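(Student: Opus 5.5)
We will prove the theorem by an a priori estimate in the energy
\[
E_s(t)=\|a(t)\|_{H^{s+1}}^2+\|b(t)\|_{H^s}^2,\qquad
D_s(t)=\|\lb^{\frac\a2}a\|_{H^{s+1}}^2+\|\lb^{\frac\b2}b\|_{H^s}^2 .
\]
The goal is an inequality of the form $\frac{d}{dt}E_s+D_s\le C\,E_s^{\gamma}+\frac12 D_s$ for some $\gamma>1$. Once we have it, local existence follows in the standard way. We construct Galerkin (Fourier truncation) approximations, which obey the same bound uniformly. They therefore stay bounded in $L^\infty_tH^{s+1}\times L^\infty_tH^s$ on a time interval $T$ depending only on $E_s(0)$. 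We then pass to the limit by Aubin--Lions compactness. Uniqueness comes from an $H^1\times L^2$ estimate for the difference of two solutions, which has the same structure at lower regularity; continuity in time follows from the usual argument.

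\textbf{Localized estimates.} We apply $\Dq$ and pair the $a$-equation with $\lambda_q^{2s+2}\Dq a$ and the $b$-equation with $\lqts\Dq b$. It is convenient to write the nonlinearities as transport terms: $a_yb_x-a_xb_y=-\nabla^\perp a\cdot\nabla b$ up to sign, and $-a_y\Delta a_x+a_x\Delta a_y=\nabla^\perp a\cdot\nabla\Delta a$, again up to sign. We decompose each product by Bony's paraproduct into low--high, high--low and high--high parts. The high--low and high--high parts lose no derivative beyond what the energy controls at $s\approx2$, once we use the embedding $H^{s}\subset L^\infty$ for $s>1$. The dangerous pieces are the two low--high interactions where the derivative lands on the high-frequency factor:
\[
I_1=\sum_q\lambda_q^{2s+2}\intttwo{\Sqmt\nabla^\perp a\cdot\nabla\Dq b\,\Dq a},\qquad
I_2=\sum_q\lqts\intttwo{\Sqmt\nabla^\perp a\cdot\nabla\Dq\Delta a\,\Dq b}.
\]
Transport by the divergence-free field $\Sqmt\nabla^\perp a$ does not by itself kill these terms, because they are cross terms between $a$ and $b$. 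The key cancellation is the following. We integrate $I_2$ by parts, moving $\nabla$ and $\Delta$ onto $\Dq b$; the leading part becomes $-\lqts\int \Sqmt\nabla^\perp a\cdot\nabla\Dq b\,\Delta\Dq a$. Since $\Delta\Dq a\approx-\lambda_q^2\Dq a$ in frequency, this cancels $I_1$ up to commutators, precisely because the weights $\lambda_q^{2s+2}$ and $\lqts$ differ by $\lambda_q^2$.

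\textbf{Main obstacle.} The step we expect to be hardest is bounding the commutator remainders left after this cancellation. We will write
\[
\Delta\Dq a=-\lambda_q^2\Dq a+(\Delta+\lambda_q^2)\Dq a,
\]
but $\Delta+\lambda_q^2$ is only of size $\lambda_q^2$ on the annulus, not small. So the cancellation must be done exactly via the symmetric structure. The idea is to replace the pairings by $\<\Lambda^{s+1}\cdot,\Lambda^{s+1}\cdot\>$ on the $a$ side and $\<\Lambda^{s}\cdot,\Lambda^{s}\cdot\>$ on the $b$ side, and then commute $\Lambda^{s}$ past the low-frequency coefficient. This leaves terms of the form
\[
\int \Lambda^{s-1+1}[\ldots]\cdot\nabla^2(\nabla^\perp a)_{\mathrm{low}}\,\Lambda^{s+1}a .
\]
Each remainder loses one derivative relative to the energy. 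We distribute that lost derivative using the dissipation: it is bounded by
\[
\|\nabla^2 a\|_{L^\infty}\,\|\Lambda^{s+1+\frac{\a}{2}}a\|_{L^2}\,\|\Lambda^{s+1-\frac{\a}{2}}\cdot\|_{L^2},
\]
and the last factor must be controlled by $\|\Lambda^{s+\frac\b2}b\|_{L^2}$. Balancing the derivative counts, $s+1-\frac\a2\le s+\frac\b2$, is exactly the condition $\a+\b\ge2$. We need the strict inequality $\a+\b>2$ to interpolate and obtain a factor $D_s^{\theta}$ with $\theta<1$, so that Young's inequality absorbs it into $\frac12D_s$.

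The same strict margin $\a+\b>2$, together with $\a,\b>0$, also permits lowering $s$ slightly below $2$: the low-frequency factor $\|\nabla^2\nabla a\|_{L^\infty}$ is no longer controlled by $H^{s+1}$ alone, and we instead borrow a small amount of dissipation, $\|\Lambda^{\frac\a2}a\|_{H^{s+1}}$. This works for $s\ge2-\varepsilon$ with $\varepsilon$ depending on $\a+\b-2$ and $\min(\a,\b)$. We would also check carefully the term $\Sqmt\nabla^\perp\Delta a\cdot\nabla\Dq a$, coming from the high--low part of the $b$-equation. It needs $\Delta\nabla a\in L^\infty$ or $L^4$, and this is where the restriction $s\ge2-\varepsilon$ genuinely binds.
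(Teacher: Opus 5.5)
Your overall architecture matches the paper's: an exact cancellation of the leading low--high pair, interpolation of every remainder between the energy and the split dissipation using $\alpha+\beta>2$, then Galerkin approximation and compactness. The paper gets the exact cancellation directly in the Littlewood--Paley setting. It uses the energy $\lambda_q^{2s}\|\Lambda\Delta_q a\|_{L^2}^2$, i.e. it tests with $\lambda_q^{2s}\Lambda^2\Delta_q^2 a$ rather than $\lambda_q^{2s+2}\Delta_q^2 a$. It also writes the $b$-nonlinearity in divergence form, $\Delta_q(a_y\Lambda^2 a)\,\Delta_q b_x-\Delta_q(a_x\Lambda^2 a)\,\Delta_q b_y$. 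Then $I_{112}+J_{112}=0$ identically, which is the clean version of your $\Lambda^{s+1}/\Lambda^{s}$ repair.

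The genuine problem is your claim that the high--low and high--high pieces ``lose no derivative beyond what the energy controls at $s\approx 2$''. In fact they sit at exactly the same critical level as your commutator remainder. Here are three instances.
\begin{itemize}
\item The high--low part of the $a$-equation has $a$ at frequency $q$ and $b$ at frequency $p\le q-2$. After using the divergence-free transport structure (equivalently, the integrations by parts in the paper's $I_{122}$), it is of size $\lambda_q^{2s+2}\|\Delta_q a\|_{L^2}^2\|\nabla^2 S_{q-2}b\|_{L^\infty}$. The factor $\|\nabla^2 S_{q-2}b\|_{L^\infty}\lesssim\sum_{p\le q}\lambda_p^{3}\|\Delta_p b\|_{L^2}$ is a full derivative beyond $b\in H^{s}$ with $s\approx 2$. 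The embedding $H^s\subset L^\infty$ controls $b$, not $\nabla^2 b$.
\item Your term $S_{q-2}\nabla^\perp\Delta a\cdot\nabla\Delta_q a$ needs $\nabla^3 a\in L^\infty$. That is a full derivative beyond $H^3$ even at $s=2$, so it is not where $s\ge 2-\varepsilon$ ``binds''.
\item The high--high parts lose $\lambda_q^{3-s}$ at $p\sim q$.
\end{itemize}
So every term must be absorbed using the dissipation, not the energy.

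In the first instance the deficit also sits on the \emph{low-frequency} factor $b$, and $b$'s own dissipation cannot pay for it. It gains at most $\lambda_p^{\beta/2}<\lambda_p$, since $s+\frac{\beta}{2}<3$. Hence any bound of the form $\|\nabla^2 b\|_{L^\infty}\|a\|_{H^{s+1}}^2$ fails to close, and borrowing ``a small amount of dissipation'' covers only an $\varepsilon$-loss, not a full derivative.

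The missing idea is to charge the low-frequency deficit to the dissipation of $a$ at the higher frequency $q\ge p$, as in the paper's estimate of $I_{121}$:
\[
\lambda_p^{3-s}\,\lambda_q^{-(1-\theta)\frac{\alpha}{2}}\,\lambda_p^{-(1-\theta)\frac{\beta}{2}}
=\lambda_p^{2-s-\varepsilon}\,\lambda_{q-p}^{-(1-\theta)\frac{\alpha}{2}},
\qquad \varepsilon=\tfrac12(1-\theta)(\alpha+\beta)-1>0 .
\]
This is summable over $p\le q$ and uses only a fractional power $1-\theta<1$ of the dissipation, so Young's inequality absorbs it. The same transfer, with margin $\varepsilon$, is what allows $s$ slightly below $2$ in every term.

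In your global $\Lambda^{s}$ framework you would need an equivalent device. One option is H\"older's inequality with $\|\nabla^2 b\|_{L^{r}}\|\Lambda^{s+1}a\|_{L^{2r'}}^2$, with Sobolev exponents tuned to $\alpha$ and $\beta$. As written, the proposal does not account for these terms, and the a priori estimate does not close.
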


Let us briefly explain the main point of the proof. The natural energy level for
the $2\frac12$-dimensional EMHD system is not symmetric in $a$ and $b$: the Hall
structure suggests estimating $a$ in $H^{s+1}$ and $b$ in $H^s$. After applying
Littlewood--Paley projections, the leading low--high interactions in the two
equations cancel, while the remaining terms are controlled by interpolating
between the energy norm
\[
    \sum_{q\geq -1}\lambda_q^{2s}
    \left(\|\lb\Delta_q a\|_{L^2}^2+\|\Delta_q b\|_{L^2}^2\right)
\]
and the dissipation norm
\[
    \sum_{q\geq -1}\lambda_q^{2s}
    \left(\|\lb^{1+\frac\a2}\Delta_q a\|_{L^2}^2
    +\|\lb^{\frac\b2}\Delta_q b\|_{L^2}^2\right).
\]
The condition $\a+\b>2$ gives the positive summability margin needed in these
dyadic estimates. This is the mechanism by which the smoothing of the two
fractional dissipations is shared between the two components.

\section{Preliminaries}\label{sec:Prelim}

\subsection{Littlewood--Paley theory}\label{LP_Intro}
We recall the inhomogeneous Littlewood--Paley decomposition on the torus
$\T^2$. Let $\chi\in C_0^\infty(\R^2)$ be a nonnegative radial function such
that
\begin{equation}\label{def:chi}
    \chi(\xi)=1 \quad \text{if } |\xi|\leq \frac34,
    \qquad
    \chi(\xi)=0 \quad \text{if } |\xi|\geq 1.
\end{equation}
Define
\begin{equation}\label{def:varphi}
    \varphi(\xi):=\chi\left(\frac{\xi}{2}\right)-\chi(\xi),
\end{equation}
and, for $q\geq -1$,
\[
    \varphi_q(\xi):=
    \begin{cases}
        \varphi(\lambda_q^{-1}\xi), & q\geq 0,\\
        \chi(\xi), & q=-1,
    \end{cases}
    \qquad \lambda_q:=2^q.
\]
Then $\{\varphi_q\}_{q\geq -1}$ forms a smooth dyadic partition of unity in
Fourier space. For a distribution $u\in\mathcal D'(\T^2)$, we set
\begin{equation}\label{def:Dq}
    \Dq u(x):=\sum_{k\in\Z^2}\hat u(k)\varphi_q(k)e^{2\pi i k\cdot x}.
\end{equation}
With this convention,
\[
    u=\sum_{q=-1}^{\infty}\Dq u
\]
in the sense of distributions. We also denote the low frequency cutoff by
\[
    \Ss_q u:=\sum_{j=-1}^{q}\Delta_j u,
\]
with the convention that $\Ss_q=0$ if $q\leq -2$. Finally, we write
\[
    \widetilde{\Delta}_q u:=\Delta_{q-1}u+\Delta_q u+\Delta_{q+1}u,
\]
where $\Delta_j=0$ for $j\leq -2$.

The Sobolev norm can be equivalently described through this decomposition:
for every $s\in\R$,
\begin{equation}\label{def_Hs_norm}
    \|u\|_{H^s(\T^2)}
    \sim
    \left(\sum_{q=-1}^{\infty}\lambda_q^{2s}
    \|\Dq u\|_{L^2(\T^2)}^2\right)^{\frac12}.
\end{equation}
The implicit constants depend only on $s$ and on the choice of the cutoffs. We
shall use this characterization throughout the proof. For background on
Littlewood--Paley theory, see \cite{bahouri2011fourier,grafakos2008classical}.

We shall repeatedly use the following Bernstein inequalities.
\begin{Lemma}\label{lemma:Bernstein}
Let $d\geq 1$, $1\leq p\leq r\leq \infty$, and $m\geq 0$. Then, for all
$q\geq -1$,
\begin{equation}\label{Bern1}
    \|\nabla^m\Dq u\|_{L^r(\T^d)}
    \les
    \lambda_q^{m+d(\frac1p-\frac1r)}
    \|\Dq u\|_{L^p(\T^d)}.
\end{equation}
Moreover, for $q\geq 0$,
\begin{equation}\label{Bern2}
    \lambda_q^m\|\Dq u\|_{L^r(\T^d)}
    \les
    \|\nabla^m\Dq u\|_{L^r(\T^d)}.
\end{equation}
\end{Lemma}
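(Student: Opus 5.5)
The two inequalities are the standard Bernstein estimates. I will prove them directly on the torus with Fourier multipliers whose symbols are rescaled Schwartz functions, and with Young's inequality. For \eqref{Bern1} with $q\geq 0$, pick $\tilde\varphi\in C_0^\infty(\R^d)$ with $\tilde\varphi\equiv 1$ on the support of $\varphi$. Then $\Dq u=\tilde\varphi(\lambda_q^{-1}D)\Dq u$, and hence
\[
\nabla^m\Dq u=K_q*\Dq u,\qquad \widehat{K_q}(k)=(2\pi i k)^{\otimes m}\tilde\varphi(\lambda_q^{-1}k),\quad k\in\Z^d .
\]
For $q=-1$, use $\tilde\chi\equiv 1$ on the support of $\chi$ instead.

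The first step is to estimate the kernel. On $\T^d$ it is the periodization of the $\R^d$ kernel,
\[
K_q(x)=\sum_{n\in\Z^d}\lambda_q^{m+d}\,G(\lambda_q(x+n)),\qquad \widehat G(\xi)=(2\pi i\xi)^{\otimes m}\tilde\varphi(\xi),
\]
and this follows from the Poisson summation formula. Since $G$ is Schwartz, I will show that $\|K_q\|_{L^\rho(\T^d)}\les \lambda_q^{m+d(1-1/\rho)}$ for every $1\le\rho\le\infty$. For $\rho=1$ this is immediate: the periodization does not increase the $L^1$ norm, so $\|K_q\|_{L^1(\T^d)}\le \lambda_q^m\|G\|_{L^1(\R^d)}$. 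For $\rho=\infty$, the bound $|G(y)|\les(1+|y|)^{-d-1}$ gives $\sup_x\sum_n\lambda_q^{d}|G(\lambda_q(x+n))|\les \lambda_q^{d}$, because $\lambda_q\geq 1/2$. Interpolating between these two cases gives all $\rho$.

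The second step is Young's inequality with $1+\frac1r=\frac1\rho+\frac1p$. This gives \eqref{Bern1}, since $d(1-\frac1\rho)=d(\frac1p-\frac1r)$. The condition $p\le r$ is exactly what makes $\rho\in[1,\infty]$.

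For \eqref{Bern2}, I argue for $q\geq 0$, where $\varphi_q$ is supported in the annulus $\frac34\lambda_q\le|\xi|\le 2\lambda_q$. Choose $\psi\in C_0^\infty$ supported in an annulus away from $0$ with $\psi\equiv1$ on the support of $\varphi$. Write
\[
\Dq u=\sum_{|\gamma|=m} c_\gamma\,\lambda_q^{-m} L_{q,\gamma}*\partial^\gamma\Dq u,\qquad \widehat{L_{q,\gamma}}(k)=\frac{(-2\pi i k)^{\gamma}}{(2\pi |k|)^{2m}}\lambda_q^{m}\psi(\lambda_q^{-1}k).
\]
This identity comes from $|\xi|^{2m}=\sum_{|\gamma|=m} c_\gamma\xi^{2\gamma}$. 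Each symbol is $\lambda_q^{0}$ times a fixed Schwartz symbol evaluated at $\lambda_q^{-1}k$; it is smooth because $\psi$ vanishes near $0$. By the same periodization argument, $\|L_{q,\gamma}\|_{L^1(\T^d)}\les 1$ uniformly in $q$. Young's inequality with $L^1$ kernels then gives $\lambda_q^m\|\Dq u\|_{L^r}\les\|\nabla^m\Dq u\|_{L^r}$.

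The only delicate point is the uniform-in-$q$ kernel bound after periodization, in particular the $\rho=\infty$ case and the $q=-1$ block. Both are handled by the decay of $G$ and the lower bound $\lambda_q\ge 1/2$. The restriction $q\ge0$ in \eqref{Bern2} is essential: $\Delta_{-1}$ contains the zero mode, which is annihilated by $\nabla^m$.
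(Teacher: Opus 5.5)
Your proof is correct and follows the standard Bernstein argument: a rescaled Schwartz kernel plus Young's inequality for \eqref{Bern1}, and the identity $|\xi|^{2m}=\sum_{|\gamma|=m}c_\gamma\xi^{2\gamma}$ on an annulus for \eqref{Bern2}. The paper gives no proof of its own and defers to Bahouri--Chemin--Danchin and Grafakos, where this same argument appears. The one torus-specific step, the Poisson-summation periodization with kernel bounds uniform in $q$ because $\lambda_q\geq\frac12$, is handled correctly.
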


We also recall Bony's paraproduct decomposition. For two distributions $u$ and
$v$ on $\T^2$,
\begin{equation}\label{lemma:Bony_formula}
    uv
    =\sum_{l=-1}^{\infty}\Ss_{l-2}u\,\Delta_l v
    +\sum_{l=-1}^{\infty}\Delta_l u\,\Ss_{l-2}v
    +\sum_{l=-1}^{\infty}\Delta_l u\,\widetilde{\Delta}_l v.
\end{equation}
In particular, applying $\Dq$ to a transport-type product gives
\begin{align}\label{eq:Bony_transport}
    \Dq(u\cdot\nabla v)
    &=\sum_{|q-l|\leq 2}\Dq\big(\Ss_{l-2}u\cdot\nabla\Delta_l v\big)
      +\sum_{|q-l|\leq 2}\Dq\big(\Delta_l u\cdot\nabla\Ss_{l-2}v\big) \notag \\
    &\quad
      +\sum_{l\geq q-2}\Dq\big(\Delta_l u\cdot\nabla\widetilde{\Delta}_l v\big).
\end{align}
These three terms correspond respectively to the low--high, high--low, and
high--high frequency interactions.

\begin{Lemma}\label{lemma:commutator_est_1}
Let $r\in(1,\infty)$ and $s>0$. Then the following commutator estimate holds:
\[
\rnorm{[\Dq,\Spmt u \lb^s] \Dp v} \les \l^{-1}_q \Linfnorm{\grad \Spmt u} \rnorm{\Dp \lb^s v}.
\]
\end{Lemma}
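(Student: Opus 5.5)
The plan is to reduce the statement to the classical first-order commutator estimate for a Littlewood--Paley block. I read the commutator as
\[
[\Dq,\Spmt u\,\lb^s]\Dp v=\Dq\big(\Spmt u\,\lb^s\Dp v\big)-\Spmt u\,\lb^s\Dq\Dp v .
\]
All the operators $\lb^s$, $\Dq$, $\Dp$ are Fourier multipliers, so they commute. Setting $f:=\Spmt u$ and $g:=\lb^s\Dp v=\Dp\lb^s v$, the expression becomes the ordinary commutator
\[
[\Dq,f]g=\Dq(fg)-f\,\Dq g .
\]
It therefore suffices to prove $\rnorm{[\Dq,f]g}\les \l_q^{-1}\Linfnorm{\grad f}\,\rnorm{g}$ for any smooth $f$ and $g\in L^r(\T^2)$. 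The fractional power $\lb^s$ plays no further role once it has been absorbed into $g$. The hypothesis $s>0$ is only needed for $\lb^s$ to be well defined on the relevant frequencies; the dyadic localization of $\Dp v$ is not used.

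Next I write $\Dq$ as a convolution on $\T^2$. For $q\ge0$, let $h:=\check\varphi\in\mathcal S(\R^2)$ and $h_q(y):=\l_q^2h(\l_q y)$. Then $\Dq g=\widetilde h_q*g$, where $\widetilde h_q(y)=\sum_{n\in\Z^2}h_q(y+n)$ is the periodization (Poisson summation), and similarly with $\check\chi$ when $q=-1$. Consequently
\[
[\Dq,f]g(x)=\int_{\R^2}h_q(x-y)\big(f(y)-f(x)\big)g(y)\,dy ,
\]
where $f$ and $g$ are extended periodically to $\R^2$ and the integral is read as an integral over the unfolded plane.

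By the mean value theorem, $|f(y)-f(x)|\le |x-y|\,\Linfnorm{\grad f}$. Hence
\[
\big|[\Dq,f]g(x)\big|\le \Linfnorm{\grad f}\int_{\R^2}|x-y|\,|h_q(x-y)|\,|g(y)|\,dy .
\]
Refolding the integral onto $\T^2$ turns the kernel into the periodization $K_q(z):=\sum_n |z+n|\,|h_q(z+n)|$, which satisfies $\|K_q\|_{L^1(\T^2)}=\||\cdot|h_q\|_{L^1(\R^2)}$. Young's inequality on $\T^2$ then gives
\[
\rnorm{[\Dq,f]g}\le \Linfnorm{\grad f}\,\||\cdot|h_q\|_{L^1(\R^2)}\,\rnorm{g}.
\]
Scaling gives $\||\cdot|h_q\|_{L^1}=\l_q^{-1}\||\cdot|h\|_{L^1}$, and this is finite because $h$ is Schwartz. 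The case $q=-1$ is the same with $\l_{-1}\sim1$.

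The only point needing care, and the one I expect to be the main obstacle, is justifying the periodic-kernel manipulation. Both $h_q$ and $|\cdot|h_q$ are Schwartz, so their periodizations converge absolutely, and Fubini lets the $\R^2$ integral be refolded onto $\T^2$. I also note that the range $r\in(1,\infty)$ is more than needed: the argument works for every $1\le r\le\infty$.
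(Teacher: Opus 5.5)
Your proposal is correct and follows essentially the same argument as the paper: write $\Dq$ as a convolution with its kernel, apply the mean value theorem to $\Spmt u$, and use Young's inequality together with the scaling $\int |y|\,|h_q(y)|\,dy \sim \lambda_q^{-1}$. The only difference is presentational: you unfold to the non-periodic kernel on $\R^2$ and then refold, which makes explicit the periodization step that the paper handles implicitly by integrating $|y|\,|h_q(y)|$ over $\T^2$ with the periodized kernel.
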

\begin{proof}
Let $h_q$ be the periodized convolution kernel of $\Dq$. By the kernel
representation of the Littlewood--Paley projection, we have
\[
[\Dq,\Spmt u \lb^s] \Dp v
= \int_{\T^2} h_q(y)\left(\Spmt u(x-y)-\Spmt u(x)\right)\lb^s\Dp v(x-y)\,dy.
\]
Using the mean value theorem and Young's convolution inequality, we obtain
\begin{align*}
\rnorm{[\Dq,\Spmt u \lb^s] \Dp v}
&\leq \left(\int_{\T^2}|h_q(y)| |y|\,dy\right)
\Linfnorm{\grad \Spmt u}\rnorm{\Dp \lb^s v} \\
&\les \l_q^{-1}\Linfnorm{\grad \Spmt u}\rnorm{\Dp \lb^s v}.
\end{align*}
This proves the lemma.
\end{proof}

\section{Proof of Theorem \ref{Thm:main}}\label{Proof:Thm_main}
We start by multiplying the equation \eqref{eq:twohalf_EMHD_1} by $-\l^{2s}_q \lb^2 \Dq^2 a $, integrating over space and summing over all $q \geq -1$: 
\begin{align*}
&\haf \frac{d}{dt} \sumq \l_q^{2s} \inttwo |\lb \Dq a|^2 \,dx + \sumq \l_q^{2s} \inttwo |\lb^{1+\frac{\a}{2}} \Dq a |^2 \,dx \\
=\ & - \sumq \l_q^{2s} \inttwo \Dq (a_y b_x) \lb^2 \Dq a \,dx + \sumq \l_q^{2s} \intttwo{\Dq (a_x b_y) \lb^2 \Dq a}\\
:=\ &I_1 + I_2.
\end{align*}
Similarly, multiplying \eqref{eq:twohalf_EMHD_2} by $\l^{2s}_q \Dq^2 b$ produces 
\begin{align*}
&\haf \frac{d}{dt} \sumq \l_q^{2s}\intttwo{|\Dq b|^2} + \sumq \l_q^{2s} \intttwo{|\lb^{\frac{\b}{2}} \Dq b|^2} \\
=\ &\sumq \l_q^{2s} \intttwo{\Dq (a_y \lb^2 a) \Dq b_x} - \sumq \l_q^{2s} \intttwo{\Dq (a_x \lb^2 a) \Dq b_y} \\
:=\ &J_1 + J_2.
\end{align*} 
Adding the above equations, we arrive at 
\begin{align*}
&\haf \frac{d}{dt} \sumq \lqts \left( \twonorm{\lb \Dq a}^2 + \twonorm{\Dq b}^2 \right) + \sumq \lqts \left( \twonorm{\lb^{1+\frac{\a}{2}}\Dq a}^2 + \twonorm{\lb^{\frac{\b}{2}} \Dq b}^2 \right) \\
=\ & I_1 + I_2 + J_1 + J_2.
\end{align*}
The local well-posedness of the system \eqref{eq:twohalf_EMHD} then follows from estimates on the terms $I_1$, $I_2$, $J_1$, and $J_2$. Using Bony's paraproduct decomposition, we decompose $I_1$ and $J_1$ as follows:
\begin{align*}
    I_1 = &- \sumq \sumpqlt \lqts \intttwo{\Dq (\Spmt a_y \Dp b_x) \Dq \lb^2 a} \\
    &- \sumq \sumpqlt \lqts \intttwo{\Dq (\Spmt b_x \Dp a_y) \Dq \lb^2 a} \\
    &- \sumq \sum_{p \geq q-2} \lqts \intttwo{\Dq (\Dtp a_y \Dp b_x) \Dq \lb^2 a} := I_{11} + I_{12} + I_{13}.
\end{align*}
\begin{align*}
    J_1 &= \sumq \sumpqlt \lqts \intttwo{\Dq (\Spmt a_y \Dp \lb^2 a) \Dq b_x} \\
    &+ \sumq \sumpqlt \lqts \intttwo{\Dq (\Spmt \lb^2 a \Dp a_y) \Dq b_x} \\
    &+ \sumq \sum_{p \geq q-2} \lqts \intttwo{\Dq (\Dtp a_y \Dp \lb^2 a) \Dq b_x} := J_{11} + J_{12} + J_{13}. 
\end{align*}
Moreover, we can split $I_{11}$ and $J_{11}$ further by exploiting the commutators 
\begin{align*}
    [\Dq,\, \Spmt a_y \p_x] \Dp b &= \Dq (\Spmt a_y \Dp b_x) - \Spmt a_y \Dq \Dp b_x, \\
     [\Dq,\, \Spmt a_y \lb^2] \Dp a &= \Dq (\Spmt a_y \Dp \lb^2 a) - \Spmt a_y \Dq \Dp \lb^2 a
\end{align*}
and write
\begin{align*}
I_{11} = &- \sumq \sumpqlt \lqts \intttwo{\Dq (\Spmt a_y \Dp b_x) \Dq \lb^2 a} \\
= &- \sumq \sumpqlt \lqts \intttwo{\left( [\Dq,\, \Spmt a_y \p_x]\Dp b \right) \Dq \lb^2 a} \\
&- \sumq \sumpqlt \lqts \intttwo{ \left( \Sqmt a_y  \Dq \Dp b_x \right) \Dq \lb^2 a}\\
&- \sumq \sumpqlt \lqts \intttwo{\left( \Spmt a_y - \Sqmt a_y \right) \Dq \Dp b_x \Dq \lb^2 a} := I_{111} + I_{112} + I_{113} \, ,
\end{align*}
as well as 
\begin{align*}
J_{11} = & \sumq \sumpqlt \lqts \intttwo{\Dq (\Spmt a_y \Dp \lb^2 a) \Dq b_x} \\
= & \sumq \sumpqlt \lqts \intttwo{\left( [\Dq,\, \Spmt a_y \lb^2]\Dp  a \right) \Dq b_x} \\
+ &\sumq \sumpqlt \lqts \intttwo{ \left( \Sqmt a_y  \Dq \Dp \lb^2 a \right) \Dq b_x}\\
+ &\sumq \sumpqlt \lqts \intttwo{ \left[ (\Spmt a_y - \Sqmt a_y ) \Dq \Dp \lb^2 a \right] \Dq b_x} := J_{111} + J_{112} + J_{113} \,.
\end{align*}
In view of 
\[
\sumpqlt \Dq \Dp b_x = \Dq b_x \ \ \text{and} \ \ \sumpqlt \Dq \Dp \lb^2 a = \Dq \lb^2 a,
\]
we observe that 
\begin{equation}\label{Proof:Thm_main_0}
    I_{112} + J_{112}  = 0. 
\end{equation}

We now proceed to bound the remaining terms. Using H\"older's inequality, the commutator estimate \ref{lemma:commutator_est_1}, and Bernstein's inequality, we obtain 
\begin{align*}
    |I_{111}| &\leq \sumq \sumpqlt \lqts \Linfnorm{\Spmt a_{xy}} \twonorm{\Dp b_x} \twonorm{\Dq \lb^2 a} \\
    &\les \sumq \l^{2s+2}_q \twonorm{\Dq a} \twonorm{\Dq b}  \left( \sumptq \l_p^3 \twonorm{\Dp a}  \right) \\
    &\les \sumq \l^{\th (s+1)}_q \twonorm{\Dq a}^\th \l_q^{\th s} \twonorm{\Dq b}^\th \l_q^{(1-\th)(s+1+\frac{\a}{2})} \twonorm{\Dq a}^{1-\th}  \cdot \\
    & \qquad  \l_q^{(1-\th)(s+\frac{\b}{2})}\twonorm{\Dq b}^{1-\th} \left ( \sumptq \l_p^{s+1} \twonorm{\Dp a} \l_p^{2-s} \right) \l_q^{1-\haf (1-\th)(\a+\b)},
\end{align*}
for any $\th \in (0,1)$. Since $2<\alpha+\beta<4$, we may choose $\th$ obeying
\begin{equation}\label{Proof:Thm_main_1}
0<\theta<  1-\frac{2}{\alpha+\beta} < \haf ,
\end{equation}
so that
\begin{equation}\label{Proof:Thm_main_2}
\varepsilon:=\frac12(1-\theta)(\alpha+\beta)-1>0.
\end{equation}
We take $s\geq 2-\el$.
By Young's inequality and Bernstein's inequality, we deduce
\begin{align}
|I_{111}| &\leq \frac{1}{64} \sumq \l_q^{2s} \twonorm{\lb^{1+\frac{\a}{2}} \Dq a}^2 + \frac{1}{64} \sumq \l_q^{2s} \twonorm{\lb^{\frac{\b}{2}} \Dq b}^2  \notag \\
&\ +C \sumq \l_q^{s+1} \twonorm{\Dq a} \lqs \twonorm{\Dq b} \left( \sumptq \l_p^{s+1} \twonorm{\Dp a} \l_p^{2-s} \l_q^{1-(1-\th)(\frac{\a}{2}+ \frac{\b}{2})}\right)^\frac{1}{\th} \notag \\
&\leq \frac{1}{64} \sumq \l_q^{2s} \left( \twonorm{\lb^{1+\frac{\a}{2}} \Dq a}^2+ \twonorm{\lb^{\frac{\b}{2}}\Dq b}^2 \right) \notag \\
&\ +C \sumq \l_q^{s+1} \twonorm{\Dq a} \lqs \twonorm{\Dq b} \left( \sumptq \l_p^{s+1} \twonorm{\Dp a} \l_p^{2-s-\el} \l_{q-p}^{-\el}\right)^\frac{1}{\th}. \label{Proof:Thm_main_3}
\end{align}
We estimate the last term on the right-hand side as follows:
\begin{align*}
   &\sumq \l_q^{s+1} \twonorm{\Dq a} \lqs \twonorm{\Dq b} \left( \sumptq \l_p^{s+1} \twonorm{\Dp a} \l_p^{2-s-\el} \l_{q-p}^{-\el}\right)^\frac{1}{\th} \\
   \les &\ \sumq \l_q^{s+1} \twonorm{\Dq a} \lqs \twonorm{\Dq b}  \sumptq \left(\l_p^{s+1} \twonorm{\Dp a} \right)^\frac{1}{\th} \\
   \les &\ \left( \sumq \lqts \twonorm{\lb \Dq a}^2 \right)^\haf \left( \sumq \lqts \twonorm{\Dq b}^2 \right)^\haf \left( \sumq \l_q^{2s} \twonorm{\lb \Dq a}^2 \right)^{\frac{1}{2\th}} \\
   \les &\ \left( \sumq \lqts \twonorm{\lb \Dq a}^2 \right)^{\haf + \frac{1}{2\th}} \left( \sumq \lqts \twonorm{\Dq b}^2 \right)^\haf \\
    \les &\ \left( \sumq \lqts \twonorm{\lb \Dq a}^2 \right)^{1+\frac{1}{2\th}} + \left( \sumq \lqts \twonorm{ \Dq b}^2 \right)^{1+\frac{1}{2\th}},
\end{align*}
where we used Jensen's inequality, the fact that $2-s-\el \leq 0$, and $\th < \haf$. As a consequence, we derive 
\begin{align}\label{Proof:Thm_main_4}
|I_{111}| &\leq \frac{1}{64} \sumq \l_q^{2s} \left( \twonorm{\lb^{1+\frac{\a}{2}} \Dq a}^2+ \twonorm{\lb^{\frac{\b}{2}}\Dq b}^2 \right) \notag \\
&\ \ + C \left( \sumq \lqts \left( \twonorm{\lb \Dq a}^2 + \twonorm{\Dq b}^2 \right) \right)^{1+\frac{1}{2\th}}.
\end{align}
To bound the term $I_{113}$, observe that since 
\[
\Spmt a_y - \Sqmt a_y 
\]
consists of only finitely many shells for $|p-q| \leq 2$, we have 
\begin{align*}
|I_{113}| &\les \sumq \lqts \left( \sumpqlt \Linfnorm{\Spmt a_y -\Sqmt a_y} \twonorm{\Dq \Dp b_x} \twonorm{\Dq \lb^2 a} \right) \\
&\les \sumq \l_q^{2s+5} \twonorm{\Dq a}^2 \twonorm{\Dq b} \\
&= \sumq \l_q^{\th(s+1)} \twonorm{\Dq a}^\th \l_q^{\th s} \twonorm{\Dq b}^\th \l_q^{(1-\th)s} \twonorm{\lb^{1+\frac{\a}{2}}\Dq a}^{1-\th} \cdot \\
&\ \ \  \l_q^{(1-\th)s} \twonorm{\lb^{\frac{\b}{2}}\Dq b}^{1-\th} \l_q^{s+1} \twonorm{\Dq a} \l_q^{2-s-\el}  \\
&\leq \frac{1}{64} \sumq \lqts \left( \twonorm{\lb^{1+\frac{\a}{2}} \Dq a}^2 + \twonorm{\lb^{\frac{\b}{2}} \Dq b}^2 \right) \\
&+ C \sumq \l_q^{s+1} \twonorm{\Dq a} \l_q^s \twonorm{\Dq b} \left( \l_q^{s+1} \twonorm{\Dq a} \l_q^{2-s-\el} \right)^{\frac{1}{\th}}\\
&\leq \frac{1}{64} \sumq \lqts \left( \twonorm{\lb^{1+\frac{\a}{2}} \Dq a}^2 + \twonorm{\lb^{\frac{\b}{2}} \Dq b}^2 \right) \\
&+ C \sumq \l_q^{s+1} \twonorm{\Dq a} \l_q^s \twonorm{\Dq b} \left( \sumptq \l_p^{s+1} \twonorm{\Dp a} \l_p^{2-s-\el} \l_{q-p}^{-\el}\right)^\frac{1}{\th}.
\end{align*}
The last term above is the same as the one appearing in \eqref{Proof:Thm_main_3}. Hence, by the same argument as in $I_{111}$, we obtain 
\begin{align}\label{Proof:Thm_main_5}
    |I_{113}| &\leq \frac{1}{64} \sumq \l_q^{2s} \left( \twonorm{\lb^{1+\frac{\a}{2}} \Dq a}^2+ \twonorm{\lb^{\frac{\b}{2}}\Dq b}^2 \right) \notag \\
&\ \ + C \left( \sumq \lqts \left( \twonorm{\lb \Dq a}^2 + \twonorm{\Dq b}^2 \right) \right)^{1+\frac{1}{2\th}}.
\end{align}
By an analogous estimate, 
\begin{align*}
    I_{13} &= - \sumq \sum_{p \geq q-2} \lqts \intttwo{\Dq (\Dtp a_y \Dp b_x) \Dq \lb^2 a} \\
    &= -\sump \sum_{q \geq p-2} \lpts \intttwo{\Dp (\widetilde{\Delta}_q a_y \Dq b_x) \Dp \lb^2 a}\\
    &= - \sumq \sum_{p = -1}^{q+2} \lpts \intttwo{\Dp (\widetilde{\Delta}_q a_y \Dq b_x) \Dp \lb^2 a} \\
    &\les \sumq \sum_{p = -1}^{q+2} \lpts \Linfnorm{\widetilde{\Delta}_q a_y} \twonorm{\Dq b_x} \twonorm{\Dp \lb^2 a} \\
    &\les \sumq \l^3_q \twonorm{\Dq a} \twonorm{\Dq b} \left(  \sum_{p = -1}^{q+2} \l_p^{2s+2} \twonorm{\Dp a} \right)
      \end{align*}
\begin{align} 
    &\les \sumq \l_q^{\th s} \twonorm{\lb \Dq a}^\th \l_q^{\th s} \twonorm{\Dq b} \l_q^{(1-\th)s} \twonorm{\lb^{1+\frac{\a}{2}} \Dq a}^{1-\th} \l_q^{(1-\th)s} \cdot \notag \\
    & \ \ \ \twonorm{\lb^{\frac{\b}{2}} \Dq b}^{1-\th} \sum_{p=-1}^{q+2} \l_p^{s+1} \twonorm{\Dp a} \l_q^{2-s-\el} \l_q^{-s-1} \l_p^{s+1} \notag \\
    &\les \sumq \l_q^{2s} \left( \twonorm{\lb^{1+\frac{\a}{2}} \Dq a}^2 + \twonorm{\lb^{\frac{\b}{2}} \Dq b}^2 \right) \notag \\
    &+ C \sumq \l^s_q \twonorm{\lb \Dq a} \l_q^s \twonorm{\Dq b} \left( \sum_{p = -1}^{q+2} \l_p^{s+1} \twonorm{\Dp a} \l_p^{2-s-\el} \l_{q-p}^{-s-1} \right)^{\frac{1}{\th}}. \label{Proof:Thm_main_6}
\end{align} 
There is no essential difference between the right-hand side of \eqref{Proof:Thm_main_6} and that of \eqref{Proof:Thm_main_3}. Since $-s-1 < 0$, we proceed as in the estimate of $I_{111}$ and conclude 
\begin{align}\label{Proof:Thm_main_7}
    |I_{13}| &\leq \frac{1}{64} \sumq \l_q^{2s} \left( \twonorm{\lb^{1+\frac{\a}{2}} \Dq a}^2+ \twonorm{\lb^{\frac{\b}{2}}\Dq b}^2 \right) \notag \\
&\ \ + C \left( \sumq \lqts \left( \twonorm{\lb \Dq a}^2 + \twonorm{\Dq b}^2 \right) \right)^{1+\frac{1}{2\th}}.
\end{align}

The estimate for $I_{12}$ is slightly more delicate. We first decompose $I_{12}$ as in the treatment of $I_{11}$ and write
\begin{align*}
    I_{12} &= - \sumq \sumpqlt \lqts \intttwo{\Dq \left( \Spmt b_x \Dp a_y \right) \Dq \lb^2 a } \\
    &= - \sumq \sumpqlt \lqts \intttwo{(\left[\Dq, \Spmt b_x \p_y \right] \Dp a) \Dq \lb^2 a} \\
    &\ \ \ - \sumq \sumpqlt \lqts \intttwo{\Sqmt b_x (\Dq \Dp a_y) \Dq \lb^2 a} \\
    &\ \ \ - \sumq \sumpqlt \lqts \intttwo{\left( \Spmt b_x -\Sqmt b_x \right) (\Dq \Dp a_y) \Dq \lb^2 a} := I_{121} + I_{122} + I_{123}.
\end{align*}
Using H\"older's, Jensen's, and Young's inequalities, as well as the commutator estimate \ref{lemma:commutator_est_1}, we deduce 
\begin{align*}
    |I_{121}| &\leq \sumq \sumpqlt \l_q^{2s+2} \twonorm{\Dq a}^2 \Linfnorm{\grad \Spmt b_x} \\
    &\les \sumq \sumpqlt \l_q^{2s+2} \twonorm{\Dq a}^2 \left( \sumptq \l_p^3 \twonorm{\Dp b} \right) \\
    &=\sumq \l_q^{\th(s+1)} \twonorm{\Dq a}^\th \l_q^{(1-\th)(s+1+\frac{\a}{2})} \twonorm{\Dq a}^{1-\th} \cdot \\
    & \ \ \ \l_q^{s+1} \twonorm{\Dq a} \left( \sumptq \l_p^{\th s} \twonorm{\Dp b}^\th \l_p^{(1-\th)(s+\frac{\b}{2})} \twonorm{\Dp b}^{1-\th} \l_p^{3-s} \l_q^{-(1-\th)\frac{\a}{2}} \l_p^{-(1-\th)\frac{\b}{2}} \right) \\
    &\les \left( \sumq \l_q^{2s} \twonorm{\lb \Dq a}^2 \right)^{\frac{\th}{2}} \left( \sumq \l_q^{2s} 
    \twonorm{\lb^{1+\frac{\a}{2}} \Dq a }^2 \right)^{\frac{1-\th}{2}} \cdot \\
    & \ \ \ \left( \sumq \lqts \twonorm{\lb \Dq a}^2 \bigg( \sumptq \l_p^{\th s} \twonorm{\Dp b}^\th \l_p^{(1-\th)(s+ \frac{\b}{2})} \twonorm{\Dp b}^{1-\th} \l_p^{2-s-\el} \l_{q-p}^{-(1-\th)\frac{\a}{2}}  \bigg)^2 \right)^\haf 
\end{align*}

\begin{align*}
 &\leq \frac{1}{64} \sumq \l_q^{2s} \twonorm{\lb^{1+\frac{\a}{2}}\Dq a}^2 + C\bigg( \sumq \l_q^{2s} \twonorm{\lb \Dq a}^2 \bigg)^{\frac{\th}{1+\th}} \cdot \\
    &\ \ \ \left( \sumq \l_q^{2s} \twonorm{\lb \Dq a}^2 \bigg( \sumptq \l_p^{2\th s} \twonorm{\Dp b}^{2\th} \l_p^{2(1-\th)s} \twonorm{\lb^{\frac{\b}{2}}\Dp b}^{2(1-\th)} \l_{q-p}^{-(1-\th)\a} \bigg) \right)^{\frac{1}{1+\th}} \\
    &\leq \frac{1}{64} \sumq \l_q^{2s} \twonorm{\lb^{1+\frac{\a}{2}}\Dq a}^2 + C\bigg( \sumq \l_q^{2s} \twonorm{\lb \Dq a}^2 \bigg)^{\frac{\th}{1+\th}} \cdot \\
    &\ \ \ \bigg( \sumq \l_q^{2s} \twonorm{\lb \Dq a}^2 \bigg)^{\frac{1}{1+\th}} \left( \sumq \bigg( \sumptq \l_p^{2\th s} \twonorm{\Dp b}^{2\th} \l_p^{2(1-\th)s} \twonorm{\lb^{\frac{\b}{2}}\Dp b}^{2(1-\th)} \l_{q-p}^{-(1-\th)\a} \bigg) \right)^{\frac{1}{1+\th}} \\
     &\leq \frac{1}{64} \sumq \l_q^{2s} \twonorm{\lb^{1+\frac{\a}{2}}\Dq a}^2 \\
      &\ \ \ + C\bigg( \sumq \l_q^{2s} \twonorm{\lb \Dq a}^2 \bigg) \left( \sumq \l_q^{2 s} \twonorm{\Dq b}^{2} \right)^{\frac{\th}{1+\th}} \left( \sump \l_p^{2s} \twonorm{\lb^{\frac{\b}{2}}\Dp b} \right)^{\frac{1-\th}{1+\th}} \\
      &\leq \frac{1}{64} \sumq \l_q^{2s} \Big( \twonorm{\lb^{1+\frac{\a}{2}}\Dq a}^2 + \twonorm{\lb^{\frac{\b}{2}}\Dp b}^2 \Big) \\
      &\ \ \ + C\bigg( \sumq \l_q^{2s} \twonorm{\lb \Dq a}^2 \bigg)^{\haf + \frac{1}{2\th}} \bigg( \sumq \l_q^{2 s} \twonorm{\Dq b}^{2} \bigg)^{\frac{1}{2}}. 
\end{align*}
Therefore, we obtain the same bound for $I_{121}$ as before:
\begin{align}\label{Proof:Thm_main_8}
     |I_{121}| &\leq \frac{1}{64} \sumq \l_q^{2s} \left( \twonorm{\lb^{1+\frac{\a}{2}} \Dq a}^2+ \twonorm{\lb^{\frac{\b}{2}}\Dq b}^2 \right) \notag \\
&\ \ + C \left( \sumq \lqts \left( \twonorm{\lb \Dq a}^2 + \twonorm{\Dq b}^2 \right) \right)^{1+\frac{1}{2\th}}. 
\end{align}
The estimate for the term $I_{123}$ follows from a similar reasoning as in $I_{113}\,$: 
\begin{align*}
    |I_{123}| &\les \sumq \lqts \left( \sumpqlt \Linfnorm{\Sqmt b_x - \Spmt b_x} \twonorm{\Dq a_y} \twonorm{\Dp \lb^2 a} \right) \\
    &\les \sumq \l_q^{2s+5} \twonorm{\Dq a}^2 \twonorm{\Dq b},
\end{align*}
which yields
\begin{align}\label{Proof:Thm_main_9}
     |I_{123}| &\leq \frac{1}{64} \sumq \l_q^{2s} \left( \twonorm{\lb^{1+\frac{\a}{2}} \Dq a}^2+ \twonorm{\lb^{\frac{\b}{2}}\Dq b}^2 \right) \notag \\
&\ \ + C \left( \sumq \lqts \left( \twonorm{\lb \Dq a}^2 + \twonorm{\Dq b}^2 \right) \right)^{1+\frac{1}{2\th}}. 
\end{align}
It remains to estimate $I_{122}$. Integrating by parts, we write
\begin{align*}
    I_{122} &= - \sumq \sumpqlt \l_q^{2s} \intttwo{\Sqmt b_x (\Dq \Dp a_y) \Dq \lb^2 a} \\
    &= \sumq \lqts \intttwo{\Sqmt b_x (\Dq a_y) \left( \Dq a_{xx} + \Dq a_{yy} \right) } \\
    &= - \sumq \lqts \intttwo{\Sqmt b_{xx} (\Dq a_y) \Dq a_{x} } -\sumq \lqts \intttwo{\Sqmt b_x (\Dq a_{xy}) \Dq a_x } \\
    &\ \ \ - \sumq \lqts \intttwo{\Sqmt b_{xy} (\Dq a_y) \Dq a_{y} }-\sumq \lqts \intttwo{\Sqmt b_x (\Dq a_{yy}) \Dq a_y } \\
    &:= I_{1221} + I_{1222} + I_{1223} + I_{1224}.
\end{align*}
We can bound $I_{1221}$ by 
\begin{align*}
    |I_{1221}| &\leq \sumq \lqts \Linfnorm{\Sqmt b_{xx}} \twonorm{\Dq a_y} \twonorm{\Dq a_x} \\
    &\les \sumq \l_q^{2s+2} \twonorm{\Dq a}^2 \left( \sum_{p \leq q-2} \l_p^3 \twonorm{\Dp b} \right).
\end{align*}
The right-hand side above is the same as that of $I_{121}$; thus we conclude 
\begin{align*}
     |I_{1221}| &\leq \frac{1}{64} \sumq \l_q^{2s} \left( \twonorm{\lb^{1+\frac{\a}{2}} \Dq a}^2+ \twonorm{\lb^{\frac{\b}{2}}\Dq b}^2 \right) \notag \\
&\ \ + C \left( \sumq \lqts \left( \twonorm{\lb \Dq a}^2 + \twonorm{\Dq b}^2 \right) \right)^{1+\frac{1}{2\th}}. 
\end{align*}
For $I_{1222}$, we can carry out the estimate as follows:
\begin{align*}
    I_{1222} &= -\haf \sumq \lqts \intttwo{\Sqmt b_x \p_y (\Dq a_x)^2} \\
    &= \haf \sumq \lqts \intttwo{\Sqmt b_{xy} |\Dq a_x|^2} \\
    &\les \sumq \lqts \Linfnorm{\Sqmt b_{xy}} \twonorm{\Dq a_x}^2 \\
    &\les \sumq \l_q^{2s+2} \twonorm{\Dq a}^2 \left( \sum_{p \leq q-2} \l_p^3 \twonorm{\Dp b} \right)
\end{align*}
and consequently,
\begin{align*}
     |I_{1222}| &\leq \frac{1}{64} \sumq \l_q^{2s} \left( \twonorm{\lb^{1+\frac{\a}{2}} \Dq a}^2+ \twonorm{\lb^{\frac{\b}{2}}\Dq b}^2 \right) \notag \\
&\ \ + C \left( \sumq \lqts \left( \twonorm{\lb \Dq a}^2 + \twonorm{\Dq b}^2 \right) \right)^{1+\frac{1}{2\th}}. 
\end{align*}
The terms $I_{1223}$ and $I_{1224}$ are treated in the same way as $I_{1221}$ and $I_{1222}$, respectively. Hence we arrive at 
\begin{align}\label{Proof:Thm_main_10}
     |I_{122}| &\leq \frac{1}{64} \sumq \l_q^{2s} \left( \twonorm{\lb^{1+\frac{\a}{2}} \Dq a}^2+ \twonorm{\lb^{\frac{\b}{2}}\Dq b}^2 \right) \notag \\
&\ \ + C \left( \sumq \lqts \left( \twonorm{\lb \Dq a}^2 + \twonorm{\Dq b}^2 \right) \right)^{1+\frac{1}{2\th}}. 
\end{align}

Combining \eqref{Proof:Thm_main_8}, \eqref{Proof:Thm_main_9} and \eqref{Proof:Thm_main_10}, and redistributing the harmless numerical constants, we have
\begin{align}\label{Proof:Thm_main_11}
     |I_{12}| &\leq \frac{1}{64} \sumq \l_q^{2s} \left( \twonorm{\lb^{1+\frac{\a}{2}} \Dq a}^2+ \twonorm{\lb^{\frac{\b}{2}}\Dq b}^2 \right) \notag \\
&\ \ + C \left( \sumq \lqts \left( \twonorm{\lb \Dq a}^2 + \twonorm{\Dq b}^2 \right) \right)^{1+\frac{1}{2\th}}. 
\end{align}
We now estimate the remaining parts in the decomposition of $J_1$. By the commutator estimate \ref{lemma:commutator_est_1}, H\"older's inequality and Bernstein's inequality, 
\begin{align*}
    |J_{111}| &\leq \sumq \sumpqlt \lqts \twonorm{[\Dq,\Spmt a_y \lb^2]\Dp a}\twonorm{\Dq b_x} \\
    &\les \sumq \sumpqlt \l_q^{2s} \l_q^{-1}\Linfnorm{\grad \Spmt a_y}\twonorm{\Dp \lb^2 a}\twonorm{\Dq b_x} \\
    &\les \sumq \l_q^{2s+2}\twonorm{\Dq a}\twonorm{\Dq b}\left( \sumptq \l_p^3\twonorm{\Dp a}\right).
\end{align*}
The right-hand side above is exactly the same as the one obtained in the estimate of $I_{111}$. Therefore, by the same argument leading to \eqref{Proof:Thm_main_4},
\begin{align}\label{Proof:Thm_main_12}
    |J_{111}| &\leq \frac{1}{64} \sumq \l_q^{2s} \left( \twonorm{\lb^{1+\frac{\a}{2}} \Dq a}^2+ \twonorm{\lb^{\frac{\b}{2}}\Dq b}^2 \right) \notag \\
&\ \ + C \left( \sumq \lqts \left( \twonorm{\lb \Dq a}^2 + \twonorm{\Dq b}^2 \right) \right)^{1+\frac{1}{2\th}}.
\end{align}
Similarly, since $\Spmt a_y-\Sqmt a_y$ consists of only finitely many shells for $|p-q|\leq 2$, we have
\begin{align*}
    |J_{113}| &\les \sumq \lqts \left( \sumpqlt \Linfnorm{\Spmt a_y-\Sqmt a_y}\twonorm{\Dq\Dp \lb^2 a}\twonorm{\Dq b_x}\right) \\
    &\les \sumq \l_q^{2s+5}\twonorm{\Dq a}^2\twonorm{\Dq b}.
\end{align*}
This is the same upper bound as in the estimate of $I_{113}$; hence
\begin{align}\label{Proof:Thm_main_13}
    |J_{113}| &\leq \frac{1}{64} \sumq \l_q^{2s} \left( \twonorm{\lb^{1+\frac{\a}{2}} \Dq a}^2+ \twonorm{\lb^{\frac{\b}{2}}\Dq b}^2 \right) \notag \\
&\ \ + C \left( \sumq \lqts \left( \twonorm{\lb \Dq a}^2 + \twonorm{\Dq b}^2 \right) \right)^{1+\frac{1}{2\th}}.
\end{align}
The estimate for $J_{12}$ is more straightforward, since no integration by
parts is needed. By H\"older's inequality and Bernstein's inequality, we have
\begin{align*}
    |J_{12}|
    &\leq \sumq \sumpqlt \lqts
    \twonorm{\Dq(\Spmt \lb^2 a \Dp a_y)}
    \twonorm{\Dq b_x} \\
    &\les \sumq \sumpqlt \lqts
    \Linfnorm{\Spmt \lb^2 a}
    \twonorm{\Dp a_y}
    \twonorm{\Dq b_x} \\
    &\les \sumq
    \l_q^{2s+2}
    \twonorm{\Dq a}
    \twonorm{\Dq b}
    \left( \sumptq \l_p^3 \twonorm{\Dp a} \right).
\end{align*}
The right-hand side is the same as the one appearing in the estimate of
$I_{111}$. Indeed, using the same choice of $\th$ and $\el$ as in
\eqref{Proof:Thm_main_1} and \eqref{Proof:Thm_main_2}, we obtain
\begin{align*}
    |J_{12}|
    &\les \sumq
    \l_q^{\th(s+1)} \twonorm{\Dq a}^{\th}
    \l_q^{\th s} \twonorm{\Dq b}^{\th}
    \l_q^{(1-\th)(s+1+\frac{\a}{2})}
    \twonorm{\Dq a}^{1-\th} \cdot \\
    &\qquad
    \l_q^{(1-\th)(s+\frac{\b}{2})}
    \twonorm{\Dq b}^{1-\th}
    \left( \sumptq \l_p^{s+1}\twonorm{\Dp a}\l_p^{2-s} \right)
    \l_q^{1-\haf(1-\th)(\a+\b)}.
\end{align*}
Therefore, by Young's inequality and the same summation argument as in
$I_{111}$, we conclude
\begin{align}\label{Proof:Thm_main_J12}
    |J_{12}|
    &\leq
    \frac{1}{64}
    \sumq \l_q^{2s}
    \left(
        \twonorm{\lb^{1+\frac{\a}{2}}\Dq a}^2
        +
        \twonorm{\lb^{\frac{\b}{2}}\Dq b}^2
    \right) \notag \\
    &\ \ +
    C\left(
        \sumq \lqts
        \left(
            \twonorm{\lb \Dq a}^2
            +
            \twonorm{\Dq b}^2
        \right)
    \right)^{1+\frac{1}{2\th}}.
\end{align}
It remains to bound $J_{13}$. Using Bernstein's inequality in the form
\[
    \twonorm{\Dq f}\les \l_q\onenorm{f},
\]
we have
\begin{align*}
    |J_{13}| &\leq \sumq\sum_{p\geq q-2}\lqts\twonorm{\Dq(\Dtp a_y\Dp\lb^2 a)}\twonorm{\Dq b_x} \\
    &\les \sumq\sum_{p\geq q-2}\l_q^{2s+2}\l_p^3\twonorm{\Dp a}^2\twonorm{\Dq b}.
\end{align*}
For $p\geq q-2$, using \eqref{Proof:Thm_main_2} again, we may write
\begin{align*}
    |J_{13}| &\les \sumq\sum_{p\geq q-2}
    \l_p^{\th(s+1)}\twonorm{\Dp a}^{\th}\l_q^{\th s}\twonorm{\Dq b}^{\th}
    \l_p^{(1-\th)(s+1+\frac{\a}{2})}\twonorm{\Dp a}^{1-\th}\cdot \\
    &\qquad \l_q^{(1-\th)(s+\frac{\b}{2})}\twonorm{\Dq b}^{1-\th}
    \l_p^{s+1}\twonorm{\Dp a}\l_p^{2-s-\el}\l_{p-q}^{-\el}.
\end{align*}
Indeed, the remaining power is summable because $2-s-\el\leq0$ and
$\{\l_{p-q}^{-\el}\}_{p\ge q-2}\in \ell^1$. Applying Young's inequality and
then the discrete Young convolution inequality, we get
\begin{align*}
    |J_{13}| &\leq \frac{1}{64}\sumq \lqts\left(\twonorm{\lb^{1+\frac{\a}{2}}\Dq a}^2+\twonorm{\lb^{\frac{\b}{2}}\Dq b}^2\right) \\
    &\quad +C\sumq\sum_{p\geq q-2}\l_p^{s+1}\twonorm{\Dp a}\l_q^s\twonorm{\Dq b}
    \left(\l_p^{s+1}\twonorm{\Dp a}\l_p^{2-s-\el}\l_{p-q}^{-\el}\right)^{\frac{1}{\th}} \\
    &\leq \frac{1}{64}\sumq \lqts\left(\twonorm{\lb^{1+\frac{\a}{2}}\Dq a}^2+\twonorm{\lb^{\frac{\b}{2}}\Dq b}^2\right) \\
    &\quad + C\left(\sumq \lqts\twonorm{\lb\Dq a}^2\right)^{\haf+\frac{1}{2\th}}
    \left(\sumq \lqts\twonorm{\Dq b}^2\right)^{\haf}.
\end{align*}
Therefore,
\begin{align}\label{Proof:Thm_main_18}
    |J_{13}| &\leq \frac{1}{64} \sumq \l_q^{2s} \left( \twonorm{\lb^{1+\frac{\a}{2}} \Dq a}^2+ \twonorm{\lb^{\frac{\b}{2}}\Dq b}^2 \right) \notag \\
&\ \ + C \left( \sumq \lqts \left( \twonorm{\lb \Dq a}^2 + \twonorm{\Dq b}^2 \right) \right)^{1+\frac{1}{2\th}}.
\end{align}
Combining \eqref{Proof:Thm_main_0}, \eqref{Proof:Thm_main_4}, \eqref{Proof:Thm_main_5}, \eqref{Proof:Thm_main_7}, \eqref{Proof:Thm_main_11}, \eqref{Proof:Thm_main_12}, \eqref{Proof:Thm_main_13}, \eqref{Proof:Thm_main_J12} and \eqref{Proof:Thm_main_18}, and again absorbing harmless numerical constants, we arrive at
\begin{align}\label{Proof:Thm_main_19}
    |I_1+J_1| &\leq \frac{1}{4} \sumq \l_q^{2s} \left( \twonorm{\lb^{1+\frac{\a}{2}} \Dq a}^2+ \twonorm{\lb^{\frac{\b}{2}}\Dq b}^2 \right) \notag \\
&\ \ + C \left( \sumq \lqts \left( \twonorm{\lb \Dq a}^2 + \twonorm{\Dq b}^2 \right) \right)^{1+\frac{1}{2\th}}.
\end{align}
The terms $I_2$ and $J_2$ can be treated in the same way, with the roles of
$x$ and $y$ interchanged. Thus
\begin{align}\label{Proof:Thm_main_20}
    |I_2+J_2| &\leq \frac{1}{4} \sumq \l_q^{2s} \left( \twonorm{\lb^{1+\frac{\a}{2}} \Dq a}^2+ \twonorm{\lb^{\frac{\b}{2}}\Dq b}^2 \right) \notag \\
&\ \ + C \left( \sumq \lqts \left( \twonorm{\lb \Dq a}^2 + \twonorm{\Dq b}^2 \right) \right)^{1+\frac{1}{2\th}}.
\end{align}
Substituting \eqref{Proof:Thm_main_19} and \eqref{Proof:Thm_main_20} into the energy identity, we obtain
\begin{align*}
&\frac{d}{dt}\sumq \lqts\left(\twonorm{\lb\Dq a}^2+\twonorm{\Dq b}^2\right)
+\sumq \lqts\left(\twonorm{\lb^{1+\frac{\a}{2}}\Dq a}^2+\twonorm{\lb^{\frac{\b}{2}}\Dq b}^2\right) \\
&\qquad \leq C\left(\sumq \lqts\left(\twonorm{\lb\Dq a}^2+\twonorm{\Dq b}^2\right)\right)^{1+\frac{1}{2\th}}.
\end{align*}
Set
\begin{align*}
    \Ee_s(t)&:=\sumq \lqts\left(\twonorm{\lb\Dq a(t)}^2+\twonorm{\Dq b(t)}^2\right), \\
    \Dd_s(t)&:=\sumq \lqts\left(
        \twonorm{\lb^{1+\frac{\a}{2}}\Dq a(t)}^2
        +\twonorm{\lb^{\frac{\b}{2}}\Dq b(t)}^2
    \right).
\end{align*}
Then the previous inequality can be written as
\begin{equation}\label{Proof:Thm_main_gronwall_0}
    \frac{d}{dt}\Ee_s(t)+\Dd_s(t)
    \leq C\Ee_s(t)^{1+\gamma},
    \qquad \gamma:=\frac{1}{2\th}>0.
\end{equation}
Dropping the nonnegative dissipation term and applying a nonlinear Gronwall
argument, we obtain the local-in-time control of \(\Ee_s\). Indeed, if
\(\Ee_s(0)>0\), then
\[
    \frac{d}{dt}\Ee_s(t)^{-\gamma}
    =-\gamma \Ee_s(t)^{-\gamma-1}\frac{d}{dt}\Ee_s(t)
    \geq -\gamma C.
\]
Hence
\begin{equation}\label{Proof:Thm_main_gronwall_1}
    \Ee_s(t)
    \leq
    \frac{\Ee_s(0)}{
        \left(1-\gamma C t\,\Ee_s(0)^\gamma\right)^{1/\gamma}}
\end{equation}
for all times for which the denominator is positive. Thus, choosing for instance
\[
    T_0:=\frac{1}{2\gamma C\left(1+\Ee_s(0)\right)^\gamma},
\]
we have
\begin{equation}\label{Proof:Thm_main_gronwall_2}
    \sup_{0\leq t\leq T_0}\Ee_s(t)
    \leq 2^{1/\gamma}\Ee_s(0)
    =2^{2\th}\Ee_s(0).
\end{equation}
The case \(\Ee_s(0)=0\) follows by the same differential inequality after a
standard regularization, or simply by applying the above estimate to
\(\Ee_s(t)+\delta\) and letting \(\delta\to0\). Integrating
\eqref{Proof:Thm_main_gronwall_0} over \([0,T_0]\) and using
\eqref{Proof:Thm_main_gronwall_2}, we further get
\begin{equation}\label{Proof:Thm_main_gronwall_3}
    \sup_{0\leq t\leq T_0}\Ee_s(t)
    +\int_0^{T_0}\Dd_s(t)\,dt
    \leq C\left(\Ee_s(0)\right).
\end{equation}
This is the a priori bound used below in the construction of local solutions.

To justify the preceding a priori estimates, we use a standard Fourier--Galerkin
approximation. Let $P_N$ denote the projection onto the Fourier modes
$\{k\in\mathbb Z^2: |k|\leq N\}$, and set
\[
    a_0^N=P_Na_0,\qquad b_0^N=P_Nb_0.
\]
We consider the finite-dimensional system
\begin{align*}
    \partial_t a^N
    + P_N\big( a_y^N b_x^N-a_x^N b_y^N\big)
    &= -\lb^\alpha a^N,\\
    \partial_t b^N
    - P_N\big( a_y^N \Delta a_x^N-a_x^N \Delta a_y^N\big)
    &= -\lb^\beta b^N,
\end{align*}
with initial data $(a_0^N,b_0^N)$. This is an ordinary differential equation
on a finite-dimensional space and hence admits a smooth local solution.
Since $P_N$ is self-adjoint and commutes with $\lb$ and $\Dq$, the estimates
above apply to $(a^N,b^N)$ with constants independent of $N$. Therefore,
for some time $T>0$ depending only on
$\|a_0\|_{H^{s+1}}+\|b_0\|_{H^s}$, we have
\begin{align*}
        &\sup_{0\leq t\leq T}
    \sumq \lqts\left(
        \twonorm{\lb\Dq a^N}^2+\twonorm{\Dq b^N}^2
    \right)
    \\
   + &\int_0^T
    \sumq \lqts\left(
        \twonorm{\lb^{1+\frac{\alpha}{2}}\Dq a^N}^2
        +\twonorm{\lb^{\frac{\beta}{2}}\Dq b^N}^2
    \right)\,dt
    \leq C.
\end{align*}

By the standard compactness argument, up to a subsequence,
\[
    (a^N,b^N)\to (a,b)
\]
strongly in lower Sobolev spaces and weakly in the above energy spaces.
Passing to the limit in the Galerkin system gives a solution of
\eqref{eq:twohalf_EMHD} on $[0,T]$ with
\[
    (a,b)\in L^\infty(0,T;H^{s+1}\times H^s).
\]
Finally, applying the same energy estimate to the difference of two solutions
gives uniqueness. This completes the proof of Theorem \ref{Thm:main}.

\section*{Acknowledgments} 
The author is grateful to Xu Yang and Mimi Dai for several stimulating discussions.

\bibliographystyle{plain}
\bibliography{twohalfEMHD}

@article {CDL14,
    AUTHOR = {Chae, Dongho and Degond, Pierre and Liu, Jian-Guo},
     TITLE = {Well-posedness for {H}all-magnetohydrodynamics},
   JOURNAL = {Ann. Inst. H. Poincar\'e{} C Anal. Non Lin\'eaire},
  FJOURNAL = {Annales de l'Institut Henri Poincar\'e{} C. Analyse Non
              Lin\'eaire},
    VOLUME = {31},
      YEAR = {2014},
    NUMBER = {3},
     PAGES = {555--565},
      ISSN = {0294-1449,1873-1430},
   MRCLASS = {35Q35 (35B30 35B53 35L60 76W05)},
  MRNUMBER = {3208454},
MRREVIEWER = {Iuliana\ Oprea},
       DOI = {10.1016/j.anihpc.2013.04.006},
       URL = {https://doi.org/10.1016/j.anihpc.2013.04.006},
}

@article {CWW15,
    AUTHOR = {Chae, Dongho and Wan, Renhui and Wu, Jiahong},
     TITLE = {Local well-posedness for the {H}all-{MHD} equations with
              fractional magnetic diffusion},
   JOURNAL = {J. Math. Fluid Mech.},
  FJOURNAL = {Journal of Mathematical Fluid Mechanics},
    VOLUME = {17},
      YEAR = {2015},
    NUMBER = {4},
     PAGES = {627--638},
      ISSN = {1422-6928,1422-6952},
   MRCLASS = {35Q35 (35B35 35B65 35R11 76W05)},
  MRNUMBER = {3412271},
       DOI = {10.1007/s00021-015-0222-9},
       URL = {https://doi.org/10.1007/s00021-015-0222-9},
}

@article {ADFL11,
    AUTHOR = {Acheritogaray, Marion and Degond, Pierre and Frouvelle, Amic
              and Liu, Jian-Guo},
     TITLE = {Kinetic formulation and global existence for the
              {H}all-{M}agneto-hydrodynamics system},
   JOURNAL = {Kinet. Relat. Models},
  FJOURNAL = {Kinetic and Related Models},
    VOLUME = {4},
      YEAR = {2011},
    NUMBER = {4},
     PAGES = {901--918},
      ISSN = {1937-5093,1937-5077},
   MRCLASS = {35Q35 (35A01 35D30 35K55 35L60 76W05)},
  MRNUMBER = {2861579},
MRREVIEWER = {Adrian\ Carabineanu},
       DOI = {10.3934/krm.2011.4.901},
       URL = {https://doi.org/10.3934/krm.2011.4.901},
}

@article {D21,
    AUTHOR = {Dai, Mimi},
     TITLE = {Local well-posedness for the {H}all-{MHD} system in optimal
              {S}obolev spaces},
   JOURNAL = {J. Differential Equations},
  FJOURNAL = {Journal of Differential Equations},
    VOLUME = {289},
      YEAR = {2021},
     PAGES = {159--181},
      ISSN = {0022-0396,1090-2732},
   MRCLASS = {76D03 (35Q35 76W05)},
  MRNUMBER = {4248458},
MRREVIEWER = {Abhik\ Kumar\ Sanyal},
       DOI = {10.1016/j.jde.2021.04.019},
       URL = {https://doi.org/10.1016/j.jde.2021.04.019},
}

@article{dai2023global,
  title={Global existence of {$2D$} electron {MHD} near a steady state},
  author={Dai, Mimi},
  journal={arXiv preprint arXiv:2306.13036},
  year={2023}
}

@article{dai2025well,
  title={Well-posedness of the electron {MHD} with partial resistivity},
  author={Dai, Mimi and Babaei, Hassan},
  journal={arXiv preprint arXiv:2503.18149},
  year={2025}
}

@book{bahouri2011fourier,
  title={Fourier Analysis and Nonlinear Partial Differential Equations},
  author={Bahouri, Hajer and Chemin, Jean-Yves and Danchin, Rapha{\"e}l},
  series={Grundlehren der mathematischen Wissenschaften},
  volume={343},
  year={2011},
  publisher={Springer},
  doi={10.1007/978-3-642-16830-7}
}

@book{grafakos2008classical,
  title={Classical Fourier Analysis},
  author={Grafakos, Loukas},
  series={Graduate Texts in Mathematics},
  volume={249},
  edition={2},
  year={2008},
  publisher={Springer}
}

@article{CL14,
  author  = {Chae, Dongho and Lee, Jinkyoung},
  title   = {On the blow-up criterion and small data global existence for the {H}all-magnetohydrodynamics},
  journal = {Journal of Differential Equations},
  volume  = {256},
  number  = {11},
  pages   = {3835--3858},
  year    = {2014},
  doi     = {10.1016/j.jde.2014.03.003}
}

@article{JO22,
  author  = {Jeong, In-Jee and Oh, Sung-Jin},
  title   = {On the {C}auchy problem for the {H}all and electron magnetohydrodynamic equations without resistivity {I}: Illposedness near degenerate stationary solutions},
  journal = {Annals of PDE},
  volume  = {8},
  number  = {2},
  pages   = {15},
  year    = {2022},
  doi     = {10.1007/s40818-022-00134-5}
}

@article{JO25,
  author  = {Jeong, In-Jee and Oh, Sung-Jin},
  title   = {Wellposedness of the Electron {MHD} Without Resistivity for Large Perturbations of the Uniform Magnetic Field},
  journal = {Annals of PDE},
  volume  = {11},
  pages   = {14},
  year    = {2025},
  doi     = {10.1007/s40818-025-00198-z}
}

@article {DT21,
    AUTHOR = {Danchin, Rapha{\"e}l and Tan, Jin},
     TITLE = {On the well-posedness of the
              {H}all-magnetohydrodynamics system in critical spaces},
   JOURNAL = {Comm. Partial Differential Equations},
  FJOURNAL = {Communications in Partial Differential Equations},
    VOLUME = {46},
      YEAR = {2021},
    NUMBER = {1},
     PAGES = {31--65},
       DOI = {10.1080/03605302.2020.1822392},
       URL = {https://doi.org/10.1080/03605302.2020.1822392},
}

@article {DT22,
    AUTHOR = {Danchin, Rapha{\"e}l and Tan, Jin},
     TITLE = {The global solvability of the
              {H}all-magnetohydrodynamics system in critical {S}obolev spaces},
   JOURNAL = {Commun. Contemp. Math.},
  FJOURNAL = {Communications in Contemporary Mathematics},
    VOLUME = {24},
      YEAR = {2022},
    NUMBER = {10},
     PAGES = {Paper No. 2150099},
}

@article {LT21,
    AUTHOR = {Liu, Lvqiao and Tan, Jin},
     TITLE = {Global well-posedness for the
              {H}all-magnetohydrodynamics system in larger critical {B}esov
              spaces},
   JOURNAL = {J. Differential Equations},
  FJOURNAL = {Journal of Differential Equations},
    VOLUME = {274},
      YEAR = {2021},
     PAGES = {382--413},
       DOI = {10.1016/j.jde.2020.10.014},
       URL = {https://doi.org/10.1016/j.jde.2020.10.014},
}

@article {DL22,
    AUTHOR = {Dai, Mimi and Liu, Han},
     TITLE = {On well-posedness of generalized
              {H}all-magneto-hydrodynamics},
   JOURNAL = {Z. Angew. Math. Phys.},
  FJOURNAL = {Zeitschrift f{"u}r angewandte Mathematik und Physik},
    VOLUME = {73},
      YEAR = {2022},
    NUMBER = {4},
     PAGES = {Paper No. 139},
       DOI = {10.1007/s00033-022-01771-3},
       URL = {https://doi.org/10.1007/s00033-022-01771-3},
}

@article {Ye22,
    AUTHOR = {Ye, Zhuan},
     TITLE = {Well-posedness results for the {$3D$} incompressible
              {H}all-{MHD} equations},
   JOURNAL = {J. Differential Equations},
  FJOURNAL = {Journal of Differential Equations},
    VOLUME = {321},
      YEAR = {2022},
     PAGES = {130--216},
       DOI = {10.1016/j.jde.2022.03.012},
       URL = {https://doi.org/10.1016/j.jde.2022.03.012},
}

@article {BSD96,
    AUTHOR = {Biskamp, D. and Schwarz, E. and Drake, J. F.},
     TITLE = {Two-dimensional electron magnetohydrodynamic turbulence},
   JOURNAL = {Phys. Rev. Lett.},
  FJOURNAL = {Physical Review Letters},
    VOLUME = {76},
      YEAR = {1996},
    NUMBER = {8},
     PAGES = {1264--1267},
       DOI = {10.1103/PhysRevLett.76.1264},
       URL = {https://doi.org/10.1103/PhysRevLett.76.1264},
}

@article {BSZD99,
    AUTHOR = {Biskamp, D. and Schwarz, E. and Zeiler, A. and Drake, J. F.},
     TITLE = {Electron magnetohydrodynamic turbulence},
   JOURNAL = {Phys. Plasmas},
  FJOURNAL = {Physics of Plasmas},
    VOLUME = {6},
      YEAR = {1999},
    NUMBER = {3},
     PAGES = {751--758},
       DOI = {10.1063/1.873312},
       URL = {https://doi.org/10.1063/1.873312},
}

\end{document}